\renewcommand*{\backref}[1]{}
\renewcommand*{\backrefalt}[4]{%
	\ifcase #1 (Not cited.)%
	\or        (Cited on page~#2.)%
	\else      (Cited on pages~#2.)%
	\fi}
\newtheorem{theorem}{Theorem}[section]
\newtheorem{lemma}[theorem]{Lemma}
\newtheorem{proposition}[theorem]{Proposition}
\theoremstyle{definition}     
\newtheorem{remark}[theorem]{Remark}
\numberwithin{equation}{section}
\def \hd #1 {\bfseries #1  \mdseries}
\def \italic #1 {\bfseries \it #1 \rm \mdseries}
\def \ra {\rightarrow}
\def \cen #1 { \begin{center} #1 \end{center}}
\def \mbz {\mathbb Z}
\def \mbc {\mathbb C}
\def \mbp {\mathbb P}
\def \mco  {\mathcal {O}}
\def \mcX {\mathcal {X}}
\def \Pic {{\rm{Pic}}}
\begin{document}
\title
 {An example of non-K\"ahler Calabi--Yau fourfold}

\author{Nam-Hoon Lee}
\address{
Department of Mathematics Education, Hongik University
42-1, Sangsu-Dong, Mapo-Gu, Seoul 121-791, Korea}
\address{
School of Mathematics, Korea Institute for Advanced Study, Dongdaemun-gu, Seoul 130-722, South Korea }
\email{nhlee@kias.re.kr}
\subjclass[2010]{14J32, 32Q25,14J35, 32J18,   14D06 }
\keywords{non-K\"ahler Calabi--Yau fourfold, semistable smoothing, semistable degeneration}
\begin{abstract}
We show that there exists a non-K\"ahler Calabi--Yau fourfold, constructing  an example by smoothing a normal crossing variety.
\end{abstract}
\maketitle
\section{Introduction}
In this note, a \emph{Calabi--Yau manifold} is a  simply-connected compact complex manifold with trivial canonical
class and $H^i(M, \mco_M) =0$ for $0< i < \dim M$.
$K3$ surfaces are  Calabi–Yau twofolds in this definition and they are all  K\"ahler (\cite{Si}), forming  a single irreducible smooth family of dimension $20$ (\cite{Ko, KoNiSp}). Hence, they are all diffeomorphic.
The moduli spaces of projective $K3$ surfaces are a countable union of analytic subspaces  inside of the family of all $K3$ surfaces.

In higher dimensions, the situation is a bit different.  K\"ahler Calabi--Yau manifolds are necessarily projective when their dimensions are greater than two and there exist non-K\"ahler Calabi--Yau threefolds. A number of non-homeomorphic K\"ahler Calabi--Yau threefolds have been constructed (\cite{ AlGrHe, Ba, KrSk})
and it is still an open problem whether there are  finitely many non-homeomorphic  K\"ahler Calabi--Yau threefolds or not. On the other hand, there are found {infinitely} many non-K\"ahler, non-homeomorphic  Calabi--Yau threefolds (\cite{Cl, Fr1, Fr2, HaSa}).
In an effort to understand the situation in a similar way to that of the $K3$ surfaces,
Reid conjectured that there still may be a single irreducible moduli space of  Calabi--Yau threefolds, such that any K\"ahler (thus, projective) Calabi--Yau threefold is the small resolution
of a degeneration of this family and that any two K\"ahler Calabi--Yau threefolds may be related by deformations, small resolutions and their inverses through non-K\"ahler Calabi--Yau threefolds, although they are non-homeomorphic (\cite{Re}).
--- figuratively speaking, projective Calabi--Yau threefolds may appear as scattered islands in the sea of (generically non-projective) Calabi--Yau threefolds just as projective $K3$ surfaces appear as scattered islands in the sea of (generically non-projective) $K3$ surfaces. This speculation demonstrates a role of non-K\"ahler Calabi--Yau manifolds in understanding K\"ahler ones.

In dimension four, the situation is even more obscure.
A huge number of non-homeomorphic K\"ahler Calabi--Yau fourfolds can be constructed as complete intersections in toric varieties. However,  to the best of  the author's knowledge, not a single example of non-K\"ahler Calabi--Yau fourfold has been found.
The  purpose of this note is to construct an example of non-K\"ahler Calabi--Yau fourfold. Hence, we establish the following:
\begin{theorem}
There exists a non-K\"ahler Calabi--Yau fourfold.
\end{theorem}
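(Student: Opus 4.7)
The plan is to follow the classical Friedman/Clemens philosophy, now available in dimension four through the Kawamata--Namikawa smoothing theorem, and produce the example as the general fiber of a semistable degeneration whose special fiber is a carefully chosen normal crossing variety. The outline is: (1) construct a $d$-semistable normal crossing fourfold $X_0$ with trivial dualising sheaf; (2) smooth it to $\mcX_t$; (3) check the Calabi--Yau properties of $\mcX_t$; (4) show $\mcX_t$ is not K\"ahler, presumably by forcing $b_2(\mcX_t)=0$ (or more generally, no K\"ahler class), using the Clemens--Schmid/Mayer--Vietoris computation.

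First I would build $X_0 = Y_1 \cup \cdots \cup Y_k$, where each $Y_i$ is a smooth projective fourfold glued to its neighbors along a smooth threefold double locus in a simple normal crossing pattern. Suitable candidates for the components and gluing are forced by two conditions: triviality of $\omega_{X_0}$ (so that the smoothing is Calabi--Yau) and the Friedman-type condition $\bigotimes_i N_{D/Y_i}|_D \cong \mco_D$ on each stratum of the double locus, which is the $d$-semistability needed to invoke the Kawamata--Namikawa smoothing theorem. If needed, I would blow up the components along appropriate subvarieties to arrange the normal bundle identity. Applying Kawamata--Namikawa then yields a one-parameter semistable family $\mcX \to \Delta$ with $\mcX_0 = X_0$ and $K_{\mcX/\Delta}$ trivial, so that $\mcX_t$ has trivial canonical bundle. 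Vanishing of $H^i(\mcX_t,\mco_{\mcX_t})$ for $0<i<4$ would follow by upper-semicontinuity from the analogous vanishing on $X_0$ (which one reads off from the Mayer--Vietoris spectral sequence applied to $\mco_{X_0}$), and simple connectedness of $\mcX_t$ from the topology of $X_0$ via a van Kampen argument together with the fact that the inclusion $X_0 \hookrightarrow \mcX$ is a homotopy equivalence for a semistable degeneration.

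For non-K\"ahlerness I would compute $H^2(\mcX_t,\mbq)$ from the combinatorial data of $X_0$ via the Clemens--Schmid exact sequence (equivalently, via the weight spectral sequence on the central fiber, whose $E_1$ page is expressed in terms of the cohomology of the strata of $X_0$). The components and the gluing in Step 1 would be chosen precisely so that the relevant piece of this spectral sequence kills all of $H^2$ of the smoothing, i.e. $b_2(\mcX_t)=0$. Since any compact K\"ahler manifold has $b_2 \geq 1$ (the class of the K\"ahler form), this rules out K\"ahler metrics on $\mcX_t$.

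\textbf{Main obstacle.} The combinatorial balancing act in Step 1 is the heart of the matter: one must simultaneously achieve (a) trivial dualising sheaf on the normal crossing variety, (b) the $d$-semistability condition on every stratum of the double locus so that the Kawamata--Namikawa smoothing applies, and (c) enough cancellation in the weight spectral sequence so that $H^2$ of the smoothing vanishes. Any one of these is routine, but having all three hold for the same explicit configuration is where the work lies. I expect the verification of (c), via a careful Mayer--Vietoris calculation on the dual complex together with the Hodge-theoretic part of Clemens--Schmid, to be the most delicate step.
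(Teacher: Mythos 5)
Your proposal is a strategy outline rather than a proof, and the gap is exactly at the point you yourself flag as ``the main obstacle'': no normal crossing fourfold $X_0$ is actually produced. Every condition you list --- triviality of $\omega_{X_0}$, $d$-semistability, and the cohomological cancellation forcing $b_2$ of the smoothing to vanish --- is left as ``the components would be chosen so that\dots''. Since the entire content of the theorem is the existence of an explicit example, deferring the construction means nothing has been proved. Moreover, your proposed non-K\"ahlerness mechanism ($b_2(\mcX_t)=0$, read off from the Clemens--Schmid/weight spectral sequence) is not shown to be achievable: for a two-component degeneration with projective components the weight spectral sequence has terms $H^2(X_1)\oplus H^2(X_2)$, $H^1(D)$, $H^0(D)$ feeding into $H^2$ of the smoothing, and it is far from clear that one can arrange all the required cancellations while simultaneously keeping $\omega_{X_0}$ trivial and the Friedman condition; no known example of this kind has $b_2=0$, and the paper's example certainly does not. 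Two smaller problems: the original Kawamata--Namikawa theorem assumes a K\"ahler central fiber, and the paper's central fiber is deliberately built so as to carry no big line bundle, so one needs the Hashimoto--Sano/Felten--Filip--Ruddat generalization; and your claim that simple connectedness of $\mcX_t$ follows from that of $X_0$ via the homotopy equivalence $X_0\hookrightarrow\mcX$ is too quick --- the equivalence is with the total space, not the nearby fiber (a nodal degeneration of an elliptic curve already shows $\pi_1$ of the general fiber is not controlled by $\pi_1(X_0)$); one needs, as in the paper, simple connectedness of the open pieces $X_i^*=X_i-D_i$ and a gluing/van Kampen argument on the pasted manifold.

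For contrast, the paper's route keeps $b_2>0$ and rules out K\"ahlerness by a positivity-versus-symmetry argument. It glues two explicit projective fourfolds $X_1,X_2$, each built from Beauville's Calabi--Yau threefold $Y$ (a crepant resolution of $E_\zeta^3/\langle\zeta\rangle$) as a resolved quotient of $Y\times\mbp^1$ by $\rho_i\times\psi$, where $\rho_1,\rho_2$ are involutions of $Y$ induced by matrices $A_1,A_2$ generating an infinite subgroup of ${\rm GL}_3(\mbz[\zeta])$. The gluing is along anticanonical copies of $Y$, and smoothing uses the generalized theorem. If the smoothing were K\"ahler it would be projective (since $H^2(M,\mco_M)=0$), and then a lemma of Hashimoto--Sano produces a big line bundle on the central fiber $\mcX$; restricting to $D_\mcX\simeq Y$ and pushing down to $E_\zeta^3$ yields a big, hence ample, class invariant under both $\sigma_1^*$ and $\sigma_2^*$ --- impossible because these generate an infinite automorphism group. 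That invariant-ample-class contradiction, not a Betti number computation, is the idea your proposal is missing.
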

We note that some examples of simply-connected non-K\"ahler compact holomorphic symplectic fourfolds 
 with trivial canonical class and \mbox{$ H^2(M, \mco_M) \neq 0$} were constructed (\cite {Bo, Gu}). We also remark that interests in non-K\"ahler Calabi--Yau manifolds in other directions also have been growing rapidly (\cite{ Fe,  FuLiYa, QiWa, To}).

We shall construct our example by smoothing a normal crossing variety.
By smoothing, we mean the reverse process of the semistable degeneration of a manifold to a normal crossing variety.
If a normal crossing variety is the central fiber of a semistable degeneration
of Calabi--Yau manifolds, it can be regarded as a member in a deformation
family of those Calabi--Yau manifolds.
 So building a normal crossing variety smoothable to
a Calabi--Yau manifold can be regarded as building a deformation type of
Calabi--Yau manifolds. The construction by smoothing is intrinsically up to deformation.

The structure of this note is as follows.

We start  Section \ref{sec2} by introducing two background materials -- a smoothing theorem  and a Calabi--Yau threefold.
The smoothing theorem will be used to smooth a normal crossing variety to a non-K\"ahler Calabi--Yau fourfold and the Calabi--Yau threefold will be used as a building block in constructing the normal crossing variety in the next sections.

In Section \ref{sec3}, we build a smoothable normal crossing variety.
The construction, starting from the Calabi--Yau threefold introduced in Section \ref{sec2}, involves several steps of taking quotients and blow-ups of varieties.

In Section \ref{sec4}, we showed that the fourfold, which is a smoothing of the normal crossing variety constructed in Section \ref{sec3}, is a non-K\"ahler Calabi--Yau fourfold and calculate its topological Euler number.

After finishing the manuscript, the author received an e-mail from Taro Sano, informing that he constructed  non-K\"ahler Calabi--Yau manifolds of dimension higher than three by smoothing method (\cite{Sa}).

\section{A smoothing theorem and a Calabi--Yau threefold}
\label{sec2}

By a variety, we mean a reduced complex analytic space.
Let $\mathcal X =X_1 \cup X_2$ be a variety whose irreducible components are two smooth varieties $X_1$ and $X_2$.
$\mathcal X$ is called a \emph{normal crossing variety} if, near any point $p \in X_1 \cap X_2$, $\mathcal X$ is locally isomorphic to
$$\{(x_0,x_1,\cdots, x_n) \in \mbc^{n+1} | x_{n-1} x_n = 0 \}$$
 with $p$ corresponding to the origin and $X_1$, $X_2$ locally corresponding to the hypersurfaces $x_{n-1}=0,  x_n = 0$ respectively in $\mbc^{n+1}$.
 Note that the variety $D_\mcX:=X_1 \cap X_2$ is  smooth.
Suppose that there is  a proper map  $\varsigma : \mathfrak{X} \ra \Delta$ from a complex
manifold ${\mathfrak X}$ onto the unit disk $\Delta=\{t \in \mbc | \| t\| \leq 1 \}$ such that
the fiber ${\mathfrak X}_t = \varsigma^{-1}(t)$ is a smooth manifold for every $t \neq 0$ and ${\mathfrak X}_0 = \mathcal X$.
We denote a generic fiber ${\mathfrak X}_t$ ( $t \neq 0$) by $M_\mcX$ and we say that $\mathcal X$ is a semistable degeneration of a smooth manifold $M_\mcX$ and that $M_\mcX$ is a \emph{semistable smoothing} (simply smoothing) of $\mcX$.

We will use the following result from \cite{HaSa} and  \cite{FeFiRu} as a   generalization of  results in \cite{KaNa}.
\begin{theorem} \label{thm21} Let $\mathcal X =X_1 \cup X_2$ be a normal crossing variety of dimension $n \ge 3$ whose irreducible components are two smooth compact varieties $X_1$ and $X_2$ such that $D_{\mathcal X}$ is smooth. Assume that
\begin{enumerate}
\item $\omega_{\mathcal X} \simeq {\mathcal O_{\mathcal X}}$,
\item $H^{n-1}({\mathcal X}, {\mathcal O}_{\mathcal X}) =0$, $H^{n-2}(X_i, {\mathcal O}_{X_i}) =0$ for $i=1, 2$,
\item $N_{D_\mcX /X_1} \otimes N_{D_\mcX /X_2} $
 on $D_\mcX$ is trivial.
\end{enumerate}

Then $\mcX$ is smoothable to a  complex manifold $M_\mcX$.
\end{theorem}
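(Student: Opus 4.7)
The plan is to apply logarithmic deformation theory, extending the classical Kawamata--Namikawa approach \cite{KaNa} to drop the K\"ahler hypothesis, which is precisely what \cite{HaSa, FeFiRu} accomplish. The three broad steps are: equip $\mcX$ with a log-smooth structure, show the log deformations are unobstructed, and promote the resulting formal family to an honest analytic family.

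First, I would put a fine saturated log structure on $\mcX$. Because $\mcX$ has exactly two smooth components meeting along the smooth divisor $D_\mcX$, Friedman's $d$-semistability condition reduces in the two-component case to the triviality of $N_{D_\mcX/X_1}\otimes N_{D_\mcX/X_2}$, which is hypothesis (3). Combined with the normal-crossing local model this yields a log-smooth morphism $\mcX^{\dagger}\to S^{\dagger}$ over the standard log point. Condition (1), $\omega_{\mcX}\simeq\mco_{\mcX}$, then promotes $\mcX^{\dagger}$ to a log Calabi--Yau space.

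Second, I would establish unobstructedness of log deformations over $S^{\dagger}$. The obstruction theory is controlled by $H^{2}$ of the log tangent complex, and the $T^{1}$-lifting argument of Kawamata--Namikawa in its logarithmic form (a log Bogomolov--Tian--Todorov theorem) uses precisely the Calabi--Yau condition (1) together with the cohomological vanishings $H^{n-1}(\mcX,\mco_\mcX)=0$ and $H^{n-2}(X_i,\mco_{X_i})=0$ from (2) to kill the relevant Hodge-theoretic obstructions. Running this argument produces a formal one-parameter log-smooth family over $\mathrm{Spf}\,\mathbb{C}[[t]]$.

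Third, and this is the main obstacle, I would promote the formal family to an actual complex analytic family $\mathfrak X\to\Delta$. In the original Kawamata--Namikawa setting one uses a polarization and Grothendieck's algebraization theorem; here no K\"ahler class is available, so this route is closed. Instead one invokes the analytic convergence machinery of Chan--Leung--Ma and Felten--Filip--Ruddat \cite{FeFiRu}, refined by Hashimoto--Sano \cite{HaSa}, which solves the log Maurer--Cartan equation with Gevrey-type estimates and thereby yields a \emph{convergent} analytic smoothing directly, without any projectivity assumption. The generic fiber of this family is the desired smoothing $M_\mcX$. Unpacking these three papers and verifying that hypotheses (1)--(3) feed correctly into their input conditions constitutes the proof.
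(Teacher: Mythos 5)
The paper does not actually prove Theorem \ref{thm21}; it imports it from \cite{HaSa} and \cite{FeFiRu} (as the non-K\"ahler upgrade of \cite{KaNa}), which is exactly the route you take, and your outline --- log structure of semistable type from the $d$-semistability condition (3), log Bogomolov--Tian--Todorov unobstructedness, and an analytic (not merely formal) smoothing supplied by the non-K\"ahler machinery of those references --- is the strategy of the cited proofs. The only slips are attributional and harmless given that you defer to the references for the substance: in the Kawamata--Namikawa argument hypothesis (2) is used to produce a log deformation class mapping to a nowhere-vanishing section of $T^1_{\mcX}\simeq \mco_{D_\mcX}$, i.e.\ a direction that genuinely smooths the double locus, rather than for unobstructedness itself, and the K\"ahler hypothesis in \cite{KaNa} enters through $E_1$-degeneration of the log Hodge--de Rham spectral sequence, not through polarization/algebraization.
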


This smoothing theorem was proved firstly in \cite{KaNa} with an assumption of  K\"ahlerness condition and the condition  was removed  in \cite {HaSa} and \cite{FeFiRu}.
We will also need the following lemma (Lemma 3.14 in \cite{HaSa}) in the proof of Theorem \ref{thm41}.

\begin{lemma}
\label{lem22}
With conditions in Theorem \ref{thm21}, assume further that $X_1, X_2$ are projective and $M_\mcX$, $D_\mcX$ are a projective Calabi--Yau $n$-fold and a  Calabi--Yau $(n-1)$-fold respectively.
Then there exists a big line bundle  on $\mcX$.
\end{lemma}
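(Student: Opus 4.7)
The plan is to transport an ample line bundle from the projective generic fibre $M_\mcX$ to the central fibre $\mcX$ via the smoothing family $\mathfrak X \to \Delta$, and then check bigness by semi-continuity.

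First I would recall the standard description of $\Pic(\mcX)$. Since $\mcX = X_1 \cup X_2$ is a normal crossing variety with smooth double locus $D_\mcX$, the sheaf sequence
\[
0 \to L \to L|_{X_1} \oplus L|_{X_2} \to L|_{D_\mcX} \to 0
\]
identifies $\Pic(\mcX)$ with the kernel of the difference-of-restrictions homomorphism $\Pic(X_1) \oplus \Pic(X_2) \to \Pic(D_\mcX)$. One route to a big line bundle on $\mcX$ is therefore to exhibit a pair $(L_1,L_2)$ with $L_i \in \Pic(X_i)$ ample and $L_1|_{D_\mcX} \simeq L_2|_{D_\mcX}$; bigness of the resulting $L$ then follows because the sequence above gives $h^0(\mcX, L^{\otimes m}) \gtrsim h^0(X_1,L_1^{\otimes m}) + h^0(X_2,L_2^{\otimes m}) - h^0(D_\mcX,L_1^{\otimes m}|_{D_\mcX}) \sim c\,m^n$.

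To produce such a pair I would not match divisors by hand, but rather start with an ample line bundle $H$ on $M_\mcX$, extend $H$ to a line bundle $\mathcal L$ on the total space $\mathfrak X$, and set $L := \mathcal L|_\mcX$; the two restrictions $\mathcal L|_{X_i}$ then automatically agree on $D_\mcX$. The extension is governed by the exponential sequence on $\mathfrak X$ together with the Leray spectral sequence for $\pi : \mathfrak X \to \Delta$. The integral class $c_1(H) \in H^2(M_\mcX,\mbz)$ extends to $H^2(\mathfrak X,\mbz) \simeq H^2(\mcX,\mbz)$ (since $\mathfrak X$ deformation retracts onto $\mcX$), and the holomorphic obstruction lives in $H^2(\mcX,\mco_\mcX)$, which I would analyse via the Mayer--Vietoris sequence
\[
\cdots \to H^1(D_\mcX,\mco_{D_\mcX}) \to H^2(\mcX,\mco_\mcX) \to H^2(X_1,\mco_{X_1}) \oplus H^2(X_2,\mco_{X_2}) \to H^2(D_\mcX,\mco_{D_\mcX}) \to \cdots,
\]
using that $D_\mcX$ is Calabi--Yau of dimension $\ge 2$ (so $H^1(D_\mcX,\mco_{D_\mcX}) = 0$) together with the vanishings furnished by the hypotheses of Theorem~\ref{thm21}.

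With $\mathcal L$ in hand, bigness of $L$ is immediate from upper semi-continuity of $h^0$ in the flat family $\pi : \mathfrak X \to \Delta$:
\[
h^0(\mcX, L^{\otimes m}) \ge h^0(M_\mcX, H^{\otimes m}) = \tfrac{H^n}{n!}\,m^n + O(m^{n-1})
\]
by Riemann--Roch for the ample line bundle $H$, which is exactly the bigness estimate on the $n$-dimensional variety $\mcX$. The main obstacle I expect is the extension step, as $H^2(\mcX,\mco_\mcX)$ is not obviously killed by the hypotheses of Theorem~\ref{thm21} for arbitrary $n$; should the obstruction fail to vanish outright, the fallback is the direct divisor-theoretic matching alluded to above, using the triviality $N_{D_\mcX/X_1} \otimes N_{D_\mcX/X_2} \simeq \mco_{D_\mcX}$ from hypothesis (3) of Theorem~\ref{thm21} to reconcile the restrictions of chosen ample divisors on $X_1$ and $X_2$ to $D_\mcX$.
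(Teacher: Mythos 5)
The paper never proves this lemma itself --- it is imported verbatim as Lemma 3.14 of \cite{HaSa} --- so your outline can only be judged on its own terms, and as it stands it has a genuine gap at its central step: the extension of the ample class from $M_\mcX$ to the total space $\mathfrak{X}$. The deformation retraction of $\mathfrak{X}$ onto $\mcX$ gives an isomorphism $H^2(\mathfrak{X},\mbz)\simeq H^2(\mcX,\mbz)$ together with a \emph{restriction} map $H^2(\mathfrak{X},\mbz)\to H^2(M_\mcX,\mbz)$; it does not say that a class on the general fibre lifts to $\mathfrak{X}$. That lifting is exactly where the difficulty sits: the image of the restriction map is constrained by the monodromy of the family, and you cannot appeal to the local invariant cycle theorem because $\varsigma:\mathfrak{X}\to\Delta$ is not known to be a K\"ahler or projective morphism --- the whole point of the paper is that the smoothing may fail to be K\"ahler, and the lemma is used contrapositively. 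Killing the obstruction in $H^2(\mcX,\mco_\mcX)$ (which your Mayer--Vietoris argument does give for $n=4$, but which does not follow from the stated hypotheses for general $n$) only handles the secondary, holomorphic obstruction, not this topological one. This lifting problem is precisely the hard content that the argument of \cite{HaSa} is designed to address; your semicontinuity step afterwards is fine, though note that the application in Theorem \ref{thm41} uses that the restrictions $\mathcal L|_{X_i}$ are big divisor classes on the components, which is a bit more than an $m^n$ growth bound for $h^0(\mcX,L^{\otimes m})$.

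Your fallback is not a repair but is actually false. Requiring ample $L_i$ on $X_i$ with $L_1|_{D_\mcX}\simeq L_2|_{D_\mcX}$ is a strong matching condition in $\Pic(D_\mcX)$ that has nothing to do with the $d$-semistability condition $N_{D_\mcX/X_1}\otimes N_{D_\mcX/X_2}\simeq\mco_{D_\mcX}$, which concerns one specific line bundle and says nothing about the images of the two restriction maps on all of $\Pic$. Worse, the fallback makes no use of the projectivity of $M_\mcX$, so if it worked it would produce a big line bundle on the normal crossing variety $\mcX$ constructed in Section \ref{sec3}; but the proof of Theorem \ref{thm41} shows that no such bundle can exist there, since its restrictions to $D_\mcX\simeq Y$ would give a big class invariant under both $\rho_1^*$ and $\rho_2^*$, contradicting Remark \ref{rem31}. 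So any correct proof must use the projectivity of the smoothing in an essential way, and the missing bridge from an ample bundle on $M_\mcX$ to a line bundle on $\mcX$ that is big on the components is the heart of the matter.
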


We will build a normal crossing variety and smooth it to a Calabi--Yau fourfold by applying Theorem \ref{thm21} and show, using Lemma \ref{lem22},  that  the Calabi--Yau fourfold is non-K\"ahler.
To make the normal crossing variety $\mcX = X_1 \cup X_2$, we need to construct the varieties $X_1, X_2$ first.
$X_1$ and $X_2$ will be built from Beauville's Calabi--Yau threefold and two involutions on it.

We briefly recall  Beauville's Calabi--Yau threefold.
Let $\zeta = e^{2\pi \frac{\sqrt{-1}}{3}}$. By $E_\zeta$, we denote the elliptic curve whose
period is $\zeta$ and by $E_\zeta^3/\langle \zeta \rangle $ the quotient of
the product manifold $E_\zeta^3$ by the scalar multiplication by
$\zeta$. Let
$$Q_0 =0,\,\, Q_1 = \frac{1-\zeta}{3},\,\, Q_2 =\frac{-1+\zeta}{3} $$
in $E_\zeta$. These are exactly the fixed points
of the scalar multiplication by $\zeta$ on $E_\zeta$. For $i, j, k=0, 1, 2$, let
$$Q_{i,j,k} = (Q_{i}, Q_{j}, Q_{k}) \in E_\zeta^3$$
and let $\overline Q_{i,j,k}$ be its image in
$E_\zeta^3/\langle \zeta \rangle $. Then $\overline Y =
E_\zeta^3/\langle \zeta \rangle $ has singularities of type
$\frac{1}{3}(1,1,1)$ at $\overline Q_{i,j,k}$'s and  the blow-up
$ Y \ra \overline Y$ at these 27
singular points gives a Calabi--Yau threefold $Y$. This is a K\"ahler
 Calabi--Yau threefold with Hodge numbers $h^{1,2}(Y)=0$ and $h^{1,1}(Y) = 36$, which  was originally found by Beauville (\cite{Be2}).

\section{Construction of a normal crossing variety $\mcX = X_1 \cup X_2$}
\label{sec3}

Consider  two $3 \times 3$ matrices in
${\rm GL}_3 \left (\mbz[\zeta] \right)$
$$A_1 =\left(
         \begin{array}{rrr}
           -1 & 0 & 0 \\
           0 & 1 & 0 \\
           0 & 0 & 1 \\
         \end{array}
       \right),
 \,\,\, A_2 =\left(
         \begin{array}{rrr}
           1 & 0 & 0 \\
           -1 & 0 & 1 \\
           1 & 1 & 0 \\
         \end{array}\right).$$
 Note that $A_i^2$ is the $3 \times 3$ identity matrix and $\det(A_i)=-1$ for $i=1, 2$.
The matrix $A_i$ induces an involution $\sigma_i$ on $E_\zeta^3$. Note that $\sigma_i^*$ acts as multiplication by $-1$ on $H^{3,0} \left(E_\zeta^3 \right)$.
Noting that the subgroup of ${\rm GL}_3 \left (\mbz[\zeta] \right)$ that is generated by  $A_1$, $A_2$ is {infinite}, we make the following remark which will play a key role in showing the non-K\"ahlerness in the proof of Theorem \ref{thm41}.
\begin{remark}
\label{rem31}
  The group of automorphisms of  $E_\zeta^3$ that is generated by $\sigma_1$, $\sigma_2$ is infinite.
\end{remark}

There is a unique involution $\rho_i$ on $Y$ such that the following diagram commutes:
$$
\begin{CD}
E_\zeta^3 	@>\sigma_i>> E_\zeta^3\\
@VVV 		@VVV\\
\overline Y @.	 \overline Y\\
@AAA 		@AAA\\
Y 	@>\rho_i>> Y
\end{CD}
$$

 We note that $\rho_i^*$ also acts as multiplication by $-1$ on $H^{3,0} (Y)$ and the fixed locus $S_i$ of $\rho_i$ is a smooth surface (a disjoint union of  irreducible surfaces).
Now we move on to the construction of smooth varieties $X_1, X_2$.
We borrow a method of  construction from \cite{KoLe} (\S 4 in \cite{KoLe}) that makes use of non-symplectic involutions on $K3$ surfaces.

Let $\psi:\mbp^1 \ra \mbp^1$ be any  involution fixing two distinct
points  and consider a quotient
$\overline X_i = (Y \times \mbp^1) / (\rho_i \times \psi$). Then the singular locus of $\overline X_i$ is a
product of  smooth surfaces and ordinary double points, resulting from the
fixed locus of $\rho_i$. Let $ X_i \ra \overline X_i$ be the blow-up along the
singular locus of $\overline X_i$. It is elementary to check that ${X_i}$ is
smooth.  Choose a point $p \in \mbp^1$ such that $p\neq \psi(p) $. Let $D'_i$
be the image of $ Y \times \{p\}$ in $\overline X_i$ and $D_i$ be the inverse image of $D'_i$
in ${X_i}$. Then $D_i$ is isomorphic to $Y$ and it is an anticanonical
divisor of ${X_i}$ whose normal bundle $N_{D_i/{X_i}}$ in $X_i$ is
trivial. Since $Y$ is projective, all the varieties $Y\times \mbp^1$, $\overline X_i$ and $X_i$ are projective.

We summarize our notations, including ones to be defined:
\begin{itemize}
\item $\zeta = e^{2\pi \frac{\sqrt{-1}}{3}}$.
\item $E_\zeta = \mbc/(\mbz \oplus \mbz \zeta)$ is the elliptic curve with period $\zeta$.
\item $\overline Y = E_\zeta^3 / \langle \zeta \rangle$.
\item $\phi: \widetilde Y \ra E_\zeta^3$ is the blow-up at the 27 points of  $Q_{i,j,k}$'s.
\item $\eta: \widetilde Y \ra Y$ is the map induced by $E_\zeta^3 \ra \overline Y$.
\item $Y \ra \overline Y$ is the blow-up of $\overline Y$ at its singular points. $Y$ is a K\"ahler Calabi--Yau threefold.
\item $\sigma_i:E_\zeta^3 \ra E_\zeta^3$ is the involution induced by the matrix $A_i$ for $i=1, 2$.
\item $\rho_i:Y \ra Y$ is the involution induced by $\sigma_i$  for $i=1, 2$.
\item $S_i = Y^{\rho_i}$ is the fixed locus of $\rho_i$  for $i=1, 2$.
\item $\check S_i = \phi(\eta^{-1}(S_i))$, the fixed locus of $\sigma_i$ for $i=1, 2$.
\item $\psi:\mbp^1 \ra \mbp^1$ is an involution fixing two distinct points $q_1, q_2 \in \mbp^1$.
\item $\overline X_i = (Y \times \mbp^1) / (\rho_i \times \psi)$  for $i=1, 2$.
\item $X_i \ra \overline X_i$ is the blow-up along the singular locus of $\overline X_i$  for $i=1, 2$.
\item    $\widetilde {X_i} \ra Y\times\mbp^1$ is  the blow-up
of $Y\times\mbp^1$ along the surface $S_i \times \{q_1, q_2\}$.
\item $D'_i$: the image of $Y \times \{ p\}$ in $\overline X_i$ for a point $p \in \mbp^1$ with $p \neq \psi(p)$  for $i=1, 2$.
\item $D_i$ : the inverse image of $D'_i$ in $X_i$  for $i=1, 2$. $D_i \simeq Y$ and $D_i \in \left| -K_{X_i} \right|$.
\item    $X_i^* = X_i -D_i$.
\item $\mcX = X_1 \cup X_2$ is the normal crossing variety of $X_1, X_2$, made by gluing along their isomorphic smooth anticanonical sections $D_1, D_2$.
\item $D_\mcX = X_1\cap X_2$.
\item $M_{\mathcal X}$ is a smoothing of a normal crossing variety $\mathcal X = X_1 \cup X_2$.
\end{itemize}

We make a  normal crossing variety $\mcX = X_1 \cup X_2$ by gluing transversally  along $D_1$ and $D_2$, (see \S2 (especially Corollary 2.4) of \cite{HaSa} for details of the gluing process). Then $D_\mcX := X_1\cap X_2$ is a copy of $Y$. Since $D_\mcX = X_1 \cap X_2$ is an anticanonical divisor  of both $X_1$ and $X_2$, $\omega_\mcX \simeq \mco_\mcX$.
 Note that   $N_{D_\mcX /X_1} \otimes N_{D_\mcX /X_2} $ is trivial. Let $X_i^* = X_i -D_i$.
For varieties $X_1$, $X_2$ and $\mcX = X_1 \cup X_2$ constructed in this section, we gather some of their properties:
\begin{proposition}
\label{prop32}
\begin{enumerate}
\item \label{c_proj} $X_1$ and $X_2$ are projective.
\item \label{c_omega}  $\omega_{\mathcal X} \simeq {\mathcal O_{\mathcal X}}$.
\item \label{c_dsemi}  $N_{D_\mcX /X_1} \otimes N_{D_\mcX /X_2} $  on $D_\mcX$ is trivial.
\item \label{c_sim}  Both $X_i$ and $X_i^*$ are simply-connected for $i=1, 2$.
\item \label{c_comx}  $H^{k}(X_i, {\mathcal O}_{X_i})  =0$ for $i=1, 2$, $k=1,2,3,4$.
\item \label{c_commcx}  $H^{k}({\mathcal X}, {\mathcal O}_{\mathcal X}) =0$ for $k=1, 2, 3$.

\end{enumerate}
\end{proposition}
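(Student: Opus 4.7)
The plan is to dispose of (1)--(3) and (5) quickly, deduce (6) from (5) via a Mayer--Vietoris sequence, and save the topological item (4) for last. Parts (1)--(3) are essentially recorded in the construction preceding the proposition: $X_i$ is projective as a blow-up of the quotient $\overline X_i$ of the projective variety $Y\times\mbp^1$; $D_i\in|-K_{X_i}|$ and transverse gluing along $D_1\simeq D_2$ give $\omega_\mcX\simeq\mco_\mcX$; and $N_{D_i/X_i}$ is trivial by the product structure near $D'_i$, so the tensor in (3) is trivial.

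For (5), I will use the standard fact that, for a smooth projective resolution of a finite quotient, holomorphic $k$-forms pull back to invariants of the cover:
\[h^{k,0}(X_i)=\dim H^0(Y\times\mbp^1,\Omega^k)^{\rho_i\times\psi}.\]
By K\"unneth and $H^0(\mbp^1,\Omega^1)=0$, the right side equals $\dim H^0(Y,\Omega^k)^{\rho_i}$. The Hodge numbers of the Calabi--Yau threefold $Y$ force $H^0(Y,\Omega^k)=0$ for $k=1,2$, and $H^0(Y,\Omega^4)=0$ trivially; the remaining $H^0(Y,\Omega^3)$ is one-dimensional with $\rho_i^*$ acting as $-1$ (already recorded in the construction), so the invariant part vanishes. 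For (6), I then feed (5) into the long exact sequence coming from
\[0\to\mco_\mcX\to\mco_{X_1}\oplus\mco_{X_2}\to\mco_{D_\mcX}\to 0,\]
together with $H^k(D_\mcX,\mco)=H^k(Y,\mco_Y)=0$ for $k=1,2$, to get $H^k(\mcX,\mco_\mcX)=0$ for $k=1,2,3$ by a direct chase.

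Part (4) is the heart of the proposition. Simple-connectivity of $X_i$ is a chain of standard facts: $Y\times\mbp^1$ is simply connected as a product of simply connected factors; the quotient $\overline X_i$ is simply connected because $\rho_i\times\psi$ has nonempty fixed locus $S_i\times\{q_1,q_2\}$; and blow-ups along smooth centers preserve $\pi_1$. For $X_i^*=X_i\setminus D_i$, triviality of $N_{D_i/X_i}$ yields a tubular neighborhood $N\simeq D_i\times\Delta$; since $D_i\simeq Y$ is simply connected, van Kampen applied to $X_i=X_i^*\cup N$ shows that $\pi_1(X_i^*)$ is normally generated by a single meridian loop $\gamma$ around $D_i$. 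It then suffices to produce a curve $C\subset X_i$ meeting $D_i$ transversally at exactly one smooth point, for then $\gamma$ bounds a disk in $C\setminus(C\cap D_i)\simeq\mbc$. I will construct $C$ by fixing $y_0\in S_i$ and pushing the curve $\{y_0\}\times\mbp^1$ down to $\overline X_i$: the identification $(y_0,p)\sim(y_0,\psi(p))$ forces the image to hit $D'_i$ exactly once, and a local coordinate analysis at the $A_1$-type singularities $(y_0,[q_j])$ shows the strict transform in $X_i$ is a smooth $\mbp^1$.

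The main obstacle is this last step for (4): one must verify, in local coordinates around the $A_1$ singularities of $\overline X_i$, that the strict transform of the image curve is smooth, passes through $D_i$ only once, and does so transversally. Everything else is routine invariant theory, Mayer--Vietoris, or van Kampen.
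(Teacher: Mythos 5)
Your proposal is correct in outline and, in several places, coincides with the paper's own argument: items (1)--(3) are indeed just read off from the construction, item (6) is the same Mayer--Vietoris chase, and your curve $C$ (the strict transform of $\{y_0\}\times\mbp^1$ for $y_0\in S_i$) is exactly the curve the paper uses for $\pi_1(X_i^*)$ --- there it appears as the component $l$ of the fiber $\nu^{-1}(x)$ of the projection $\nu:X_i\to Y/\rho_i$ over a branch point which meets $D_i$ transversally in a single point, and the paper asserts its properties without carrying out the local computation you flag as the main obstacle; your van Kampen reduction to a meridian loop and its contraction inside $l-D_i$ is the paper's argument verbatim.

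The genuine differences are in (5) and in $\pi_1(X_i)$. For (5) you compute $h^{k,0}(X_i)$ as invariant holomorphic forms on $Y\times\mbp^1$ and then (implicitly) use Hodge symmetry on the projective $X_i$; the paper instead bounds $H^k(X_i,\mco_{X_i})$ for $k=1,2$ by the cohomology of the double cover $\widetilde X_i$ and handles $k=3,4$ by Serre duality together with the structure-sheaf sequence of $D_i\subset X_i$, so it never invokes Hodge theory or the extension of invariant forms across a resolution; both routes work, yours is shorter but rests on those two (true, but not completely formal) inputs, which you should state explicitly. For $\pi_1(X_i)$ your justification has one soft spot: you quotient first and blow up second, and ``blow-ups along smooth centers preserve $\pi_1$'' is a statement about smooth ambient varieties, whereas $\overline X_i$ is singular along the center being blown up. The conclusion is still true (resolving quotient singularities does not change $\pi_1$), but the clean fix --- and the paper's route --- is to commute the two operations: $X_i$ is the quotient of the smooth, simply connected blow-up $\widetilde X_i$ of $Y\times\mbp^1$ along $S_i\times\{q_1,q_2\}$ by an involution with non-empty fixed locus (the exceptional divisor), so the same fixed-point argument you apply to $\overline X_i$ (the paper spells it out via the universal cover and branch loci) gives simple connectivity of $X_i$ directly. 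With that adjustment, and with the transversality check for $C$ carried out (the paper simply asserts it), your proof is complete.
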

\begin{proof}
The properties (\ref{c_proj}), (\ref{c_omega}), (\ref{c_dsemi}) are already shown.

\medskip

We show the property (\ref{c_sim}).
One can obtain ${X_i}$ differently. Let ${q_1, q_2}$ be the
fixed points of $\psi$. Let $\widetilde {X_i}$ be the blow-up
of $Y\times\mbp^1$ along the surface $S_i \times \{q_1, q_2\}$. Then the involution on
$Y\times\mbp^1$ induces an involution on $\widetilde {X_i}$, whose fixed locus
is the exceptional divisor over $S_i \times \{q_1, q_2\}$. The quotient of~
$\widetilde {X_i}$ by the involution is isomorphic to ${X_i}$. This may
be summarized by the diagram,
$$
\begin{CD}
\widetilde{{X_i}} 	@>>> {{X_i}}\\
@VVV 		@VVV\\
Y\times\mbp^1 	@>>> \overline X_i
\end{CD}
$$

Let $\widehat {X_i} \ra {X_i}$ be the universal covering. The fourfold
$\widetilde {X_i}$ is simply-connected, therefore the quotient map
$\widetilde {X_i}\to{X_i}$ lifts to a  map $\widetilde{X_i} \ra \widehat{X_i}$
so there is a commutative diagram:
\[
\xymatrix{
    &   \widehat {X_i} \ar[d]\\
\widetilde {X_i} \ar[r]\ar[ur] & {X_i} .}
\]
Since the map $\widetilde {X_i} \ra  {X_i}$ is a double covering, the degree of the map $\widetilde {X_i} \ra \widehat {X_i}$ is one or two. Suppose that the degree of this map is one. Then it is an isomorphism and so the universal covering $\widehat {X_i} \ra {X_i}$ is of degree two, having the same branch locus with that of the map
 $\widetilde {X_i} \ra {X_i}$.
Since the involution $\rho_i \times \psi$ has fixed points, the induced involution on $\widetilde {X_i}$ also has fixed points and hence the branch locus of the map
 $\widetilde {X_i} \ra {X_i}$ is not empty. This means that  the universal covering $\widehat {X_i} \ra {X_i}$  has  non-empty branch locus, which is impossible. Hence, we conclude that  the degree of the map $\widetilde {X_i} \ra \widehat {X_i}$ is two and, accordingly, the universal covering $\widehat {X_i} \ra {X_i}$
is of degree one, which implies that it is an isomorphism.
Therefore, $\widehat {X_i}$ is necessarily isomorphic to ${X_i}$ and so ${X_i}$ is simply-connected.

There is a natural projection: $\overline X_i = (Y\times\mbp^1)/ ({\rho_i}\times \psi) \ra Y /{\rho_i}$.
Let $\nu$ be the composition of ${X_i} \ra \overline X_i$ and $(Y\times\mbp^1)/ ({\rho_i} \times \psi ) \ra Y /{\rho_i}$. Let $x \in Y/{\rho_i}$ be a point in the branch locus of
the map $Y \ra Y /{\rho_i}$. Then $\nu^{-1}(x)$ is a union of three smooth
rational curves, one of which (denoted by $l$) crosses ${D_i}$ transversely at
a single point and the other two are disjoint from ${D_i}$, resulting from the
blow-up. Since ${X_i}$ and ${D_i}$ are simply connected, the fundamental group $\pi_1(X_i^*)$ of $X_i^*$ is
generated by a loop around ${D_i}$. We can assume that the loop is contained in
$l^* = l - {D_i}$. Since the loop can be contracted to a point in $l^*$,
$X_i^*$ is simply-connected.

\medskip

We move on to the property (\ref{c_comx}).
Since
$$\dim H^k(X_i, \mco_{X_i}) \le \dim H^k(\widetilde {X_i}, \mco_{\widetilde {X_i}}) = \dim H^k(Y \times \mbp^1, \mco_{Y \times \mbp^1}) =0$$
for $k=1, 2$, we have
$$H^k(X_i, \mco_{X_i}) =0$$
for $k=1, 2$.

Note that $X_i$ has an effective anticanonical divisor $D_i$, which is a Calabi--Yau threefold. Hence, we have
$$H^4(X_i, \mco_{X_i}) \simeq H^0(X_i, \Omega_{X_i}^4) =0.$$

Taking the cohomology of the structure sheaf
sequence,
\[
0 \ra \mco_{X_i}(K_{X_i}) \ra \mco_{X_i} \ra \mco_{D_i} \ra 0,
\]
we obtain an exact sequence
\begin{align*}
  H^3({X_i}, \mco_{X_i}(K_{{X_i}}))   \ra H^3({X_i}, \mco_{X_i}) \ra  H^3({D_i}, \mco_{D_i}) \ra \\
  H^4({X_i}, \mco_{X_i}(K_{{X_i}})) \ra H^4({X_i}, \mco_{X_i}) =0.
\end{align*}
Since, by Serre duality,
$$H^3({X_i}, \mco_{X_i}(K_{{X_i}}))\simeq H^{1}({X_i}, \mco_{X_i}) =0,$$
$$\dim H^4({X_i}, \mco_{X_i}(K_{{X_i}})) = \dim H^0({X_i}, \mco_{X_i})=1$$ and
$$\dim H^3({D_i}, \mco_{D_i})=1,$$
we have $\dim   H^3({X_i}, \mco_{X_i}) =0$.

\medskip

Finally, we show the property (\ref{c_commcx}).
From the exact sequence of sheaves
$$0 \ra \mco_\mcX \ra \mco_{X_1}\oplus\mco_{X_2} \ra \mco_{D_\mcX} \ra 0,$$
we obtain an exact sequence
$$H^{k-1}({D_\mcX}, \mco_{D_\mcX}) \ra H^k({\mcX}, \mco_{\mcX})  \ra H^k({X_1}, \mco_{X_1}) \oplus H^k({X_2}, \mco_{X_2}).$$
Since
$$H^{k-1}({D_\mcX}, \mco_{D_\mcX}) =H^k({X_1}, \mco_{X_1}) = H^k({X_2}, \mco_{X_2}) =0 $$
for $k=2,3$, we have
$$H^2({\mcX}, \mco_{\mcX}) =H^3({\mcX}, \mco_{\mcX})  =0.$$
Moreover, the exact sequence
\begin{align*}
0 \ra H^0({\mcX}, \mco_{\mcX})  \ra H^0({X_1}, \mco_{X_1}) \oplus H^0({X_2}, \mco_{X_2}) \ra H^{0}({D_\mcX}, \mco_{D_\mcX})\\
 \ra H^1({\mcX}, \mco_{\mcX})  \ra H^1({X_1}, \mco_{X_1}) \oplus H^1({X_2}, \mco_{X_2})=0
\end{align*}
gives $H^1({\mcX}, \mco_{\mcX})  =0$.

\end{proof}

\section{The example}
\label{sec4}

By Theorem \ref{thm21} with the properties in Proposition \ref{prop32}, one can show that the normal crossing variety $\mcX$, constructed in Section \ref{sec3}, is smoothable to a smooth fourfold $M_\mcX$ with trivial canonical class.
We can also check that $H^i(M_\mcX,\mco_\mcX) = 0$ for $i = 1, 2,3$  by the upper semicontinuity theorem with the property (\ref{c_commcx}) in Proposition \ref{prop32}.

\begin{theorem}\label{thm41}
$M_\mcX$ is a non-K\"ahler Calabi--Yau fourfold.
\end{theorem}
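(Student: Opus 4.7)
To show $M_\mcX$ is a Calabi--Yau fourfold, I would first check each defining condition. Theorem~\ref{thm21} applies via Proposition~\ref{prop32}(\ref{c_omega}), (\ref{c_dsemi}), (\ref{c_commcx}), giving $\omega_{M_\mcX}\simeq\mco$; upper semicontinuity combined with Proposition~\ref{prop32}(\ref{c_commcx}) yields $H^i(M_\mcX,\mco)=0$ for $i=1,2,3$; and Seifert--van Kampen on $M_\mcX=U_1\cup U_2$, where $U_i$ deformation retracts onto $X_i^*$ (simply connected by Proposition~\ref{prop32}(\ref{c_sim})) and $U_1\cap U_2$ retracts onto the $S^1$-bundle normal to $D_\mcX$ in $M_\mcX$, gives $\pi_1(M_\mcX)=1*_{\mbz}1=1$.

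For non-K\"ahlerness I would argue by contradiction. Assume $M_\mcX$ is K\"ahler. Then $h^{0,2}(M_\mcX)=0$ forces it to be projective, so Lemma~\ref{lem22} (whose remaining hypotheses are Proposition~\ref{prop32}(\ref{c_proj}) and that $D_\mcX\simeq Y$ is Calabi--Yau) supplies a big line bundle $L$ on $\mcX$. By Mayer--Vietoris, some restriction $L|_{X_i}$ (without loss of generality $i=1$) is big on $X_i$. Pulling back through the double cover $\widetilde X_1\to X_1$ preserves bigness, and writing the pullback on $\widetilde X_1$ as $\pi^*M_1+(\text{exceptional})$ with respect to $\pi\colon\widetilde X_1\to Y\times\mbp^1$ gives $M_1$ big on $Y\times\mbp^1$ and $(\rho_1\times\psi)$-invariant (the exceptional divisors lie over the individually fixed $S_1\times\{q_j\}$, hence are individually preserved by the involution). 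The K\"unneth identification $\Pic(Y\times\mbp^1)=\Pic(Y)\oplus\Pic(\mbp^1)$ writes $M_1=\pr_Y^*N_1\otimes\pr_{\mbp^1}^*\mco(d)$ with $N_1$ $\rho_1$-invariant, and $h^0(Y\times\mbp^1,M_1^m)=(md+1)\cdot h^0(Y,N_1^m)$ together with bigness of $M_1$ forces $d>0$ and $N_1$ big on $Y$. Running the same recipe through $L|_{X_2}$ produces a $\rho_2$-invariant line bundle on $Y$; both coincide with $L|_{D_\mcX}$ under $D_i\simeq Y$, so a single $N:=N_1$ is big on $Y$ and invariant under both $\rho_1$ and $\rho_2$.

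Finally, pulling $N$ through $\eta\colon\widetilde Y\to Y$ (finite cover preserving bigness) and then pushing forward through the blow-up $\phi\colon\widetilde Y\to E_\zeta^3$ (which preserves bigness, by Hartogs extension across the codimension-three blow-up centers in $E_\zeta^3$) gives a class $L_0\in\mathrm{NS}(E_\zeta^3)$ that is big on the abelian threefold and simultaneously invariant under $\sigma_1$ and $\sigma_2$ (by equivariance of $\eta$ and $\phi$ combined with invariance of $N$). Via the Appell--Humbert correspondence, bigness of $L_0$ on the abelian variety corresponds to positive-definiteness of the associated Hermitian form on $H^1(E_\zeta^3,\mbr)$, so the stabilizer of $L_0$ among origin-fixing holomorphic automorphisms of $E_\zeta^3$ lies in the compact unitary group $U(H_{L_0})$, and being also discrete it is finite. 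This contradicts Remark~\ref{rem31}. The principal technical obstacle is preserving \emph{bigness} (not merely positive top self-intersection) throughout the chain of pullbacks, blow-up descents, and K\"unneth decompositions: a class with $L_0^3>0$ but indefinite signature on the abelian threefold would land the stabilizer only inside a non-compact group $U(p,q)$, which can well contain the infinite linear group of Remark~\ref{rem31}.
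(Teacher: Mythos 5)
Your proposal is correct and follows essentially the same route as the paper: the Calabi--Yau conditions via Theorem \ref{thm21}, Proposition \ref{prop32}, upper semicontinuity and Seifert--van Kampen, then K\"ahler $\Rightarrow$ projective $\Rightarrow$ a big line bundle on $\mcX$ by Lemma \ref{lem22}, whose class descends to a big, $\sigma_1^*$- and $\sigma_2^*$-invariant (hence ample) class on $E_\zeta^3$, contradicting the infiniteness in Remark \ref{rem31}. The only divergence is one intermediate step: the paper gets bigness of the class on $D_\mcX\simeq Y$ by restricting a big divisor to a general fiber of the fibration $X_i\to\mbp^1$ (citing Lazarsfeld), whereas you pass through $\widetilde X_1\to Y\times\mbp^1$ and a K\"unneth section count, and you spell out the final finiteness argument via Appell--Humbert; both implementations are sound.
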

\begin{proof}
We only need to show that $M_\mcX$ is  simply-connected and non-K\"ahler.

First, we show the simply-connectedness.
We can obtain the topological type of $M_\mcX$ by pasting $X_1^*$ and $X_2^*$.
 One can
regard the normal bundle $N_{{D_\mcX}/X_i}$ as a complex manifold containing $D_\mcX$. Then
$$N_{{D_\mcX}/X_i}^* :=  N_{{D_\mcX}/X_i} - D_\mcX$$
 is a $\mbc^*$-bundle over ${D_\mcX}$, where $\mbc^* :=\mbc - \{0\}$. The triviality property on $N_{D_\mcX /X_1} \otimes N_{D_\mcX /X_2} $  implies the map
 $$\varphi: N_{{D_\mcX}/X_1}^* \ra N_{{D_\mcX}/X_2}^*,$$
locally defined by
 $$(x \in \mbc^* , y \in {D_\mcX}) \mapsto \left({1}/{x} , y \right),$$
 is globally well-defined and an isomorphism. Note that ${D_\mcX}$ in $X_i$ has a neighborhood $U_i$ that is homeomorphic to $N_{{D_\mcX}/X_i}$.
 Let $U_i^* = U_i - {D_\mcX}$. Then the map $\varphi$ induces a homeomorphism between $U_1^*$ and $U_2^*$.
 One can construct a manifold $M'$ by pasting together $X_1^*$ and $X_2^*$ along $U_1^*$ and $U_2^*$ with the homeomorphism.
The manifold $M'$ is homeomorphic to $M_\mcX$.
 Note that $X_1^*$, $X_2^*$ are  simply-connected (the property (\ref{c_sim}) in Proposition \ref{prop32}).
 Hence, by Seifert--van Kampen theorem, $M'$ is simply-connected.

\medskip

For the non-K\"ahlerness, suppose that $M_\mcX$ is K\"ahler, then it is necessarily projective. Note that $D_\mcX$, $X_1$ and $X_2$ are all projective (the property (\ref{c_proj}) in Proposition \ref{prop32}). Hence, by Lemma \ref{lem22}, there exists a big line bundle $\mathcal L$ on $\mcX$. Let $h_i$ be the  big divisor class in $\Pic(X_i)$ corresponding to $\mathcal L|_{X_i}$, then $h_1|_{D_\mcX}$ is linearly equivalent to $h_2|_{D_\mcX}$. Note that $D_\mcX$ is a copy of $Y$.
Let us denote the divisor class in $\Pic(Y)$ of $h_1|_{D_\mcX}$, $ h_2|_{D_\mcX}$ by $\hat h$.
Chasing the construction of $X_1$ and $X_2$, one can check that $\hat h$ belongs to $\Pic(Y)^{\rho_1^*} \cap \Pic(Y)^{\rho_2^*}$, where $\Pic(Y)^{\rho_i^*}$ is the subgroup of $\Pic(Y)$ that consists of the classes invariant under ${\rho_i^*}$.

The linear system $\left | D_\mcX \right |$ is base-point free and it gives a fibration
$X_i \ra \mbp^1$
and $D_\mcX$ is one of its generic fibers. Hence $\hat h$ is a big divisor of $Y$ (see, for example, Corollary 2.2.11 in \cite{La}).
Let $\phi: \widetilde Y \ra E_\zeta^3$ be the blow-up at the 27 points of  $Q_{i,j,k}$'s and $\eta: \widetilde Y \ra Y$ be the map induced by $E_\zeta^3 \ra \overline Y$ such that the diagram commutes:
$$
\begin{CD}
{\widetilde Y} 	@>\eta>> {Y}\\
@V\phi VV 		@VVV\\
E_\zeta^3  	@>>> \overline Y
\end{CD}
$$

It is not hard to check that $\check h =\phi_*(\eta^*(\hat h))$ is a big divisor of $E_\zeta^3$ and the class $\check h$ belongs to
${\rm NS}(E_\zeta^3 )^{\sigma_1^*} \cap {\rm NS}(E_\zeta^3 )^{\sigma_2^*}$.
Note that any big divisor of the abelian variety $E_\zeta^3$ is ample.
However, the group of automorphisms of  $E_\zeta^3$ that is generated by $\sigma_1, \sigma_2$ is infinite (Remark \ref{rem31}) and so ${\rm NS}(E_\zeta^3 )^{\sigma_1^*} \cap {\rm NS}(E_\zeta^3 )^{\sigma_2^*}$ does not contain an ample class. Therefore, we have a contradiction and $M_\mcX$ should be non-K\"ahler.

%

\end{proof}

Topological invariants of $M_\mcX$ can be calculated from  the topological manifold $M'$ in the proof of Theorem \ref{thm41}.
For example, the topological Euler number $\chi(M_\mcX)$ of $M_\mcX$ is
$$\chi(M_\mcX) = \chi(M') = \chi(X_1^*) + \chi(X_2^*)- \chi(U_1).$$
Note
$$\chi(X_i) = \chi(X_i^*) + \chi(D_i) = \chi(X_i^*) + \chi(Y)$$
and $\chi(U_1) =\chi(S^1) \chi(D_1) = 0$. Hence,
$$\chi(M_\mcX) = \chi(X_1) + \chi(X_2)-2 \chi(Y).$$
The topological Euler characteristic of ${\widetilde X_i}$ is
\[\chi({\widetilde X_i}) = 2 \chi(Y)  + 2  \chi(S_i)\]
On the other hand,
\[2 \chi({X_i}) -4\chi(S_i)= \chi({\widetilde X_i})\]
and so
\begin{align*}
\chi({X_i}) &=\frac{1}{2} \left (4 \chi(S_i) + \chi({\widetilde X_i}) \right )\\
       &=2  \chi(S_i)  + \chi(Y)  +   \chi(S_i)\\
       &=\chi(Y) + 3  \chi(S_i).
\end{align*}

Let $\check S_i = \phi(\eta^{-1}(S_i))$.
Note that $\check S_i$ is the fixed locus of $\sigma_i$.
Let $\Theta = \{Q_{i,j,k} \mid i,j,k=0,1,2 \}$. One can easily check
$$\chi(S_i) = 2 \left | \check S_{i} \cap \Theta \right | = 18,$$
where $\left | \check S_{i} \cap \Theta \right |$ is the number of points in $ \check S_{i} \cap \Theta$.

Therefore, the topological Euler number $\chi(M_\mcX)$ of $M_\mcX$ is
\begin{align*}
\chi(M_\mcX) &= \chi(X_1) + \chi(X_2)-2 \chi(Y)\\
&= \chi(Y) + 3  \chi(S_1) +\chi(Y) + 3  \chi(S_2) - 2 \chi(Y)\\
&=   3  \chi(S_1) +3  \chi(S_2)\\
&=108.
\end{align*}

The pair of matrices  $A_1$, $A_2$ in Section \ref{sec2}, which can be used in the construction, is  obviously not unique.
Any pair of $3\times3$ matrices $A_1$, $A_2$ that satisfies the following conditions,
\begin{enumerate}
\item $A_i \in {\rm GL}_3(\mbz[\zeta])$,
\item $A_i^2$ is the $3 \times 3$ identity matrix,
\item $\det(A_i)=-1$,
\item $\rho_i$ is not fixed-free and its fixed locus is a smooth surface and
\item The subgroup of ${\rm GL}_3(\mbz[\zeta])$ containing both $A_1, A_2$ is infinite,
\end{enumerate}
gives rise to a non-K\"ahler Calabi--Yau fourfolds through the construction of Sections \ref{sec3} and \ref{sec4}, where the conditions guarantee, respectively,
\begin{enumerate}
\item $A_i$ induces an automorphism of $E_\zeta^3$ which  induces an automorphism of $Y$.
\item $A_i$ induces an involution of $E_\zeta^3$ which  induces an involution of $Y$,
\item  $\rho_i^*$  acts as multiplication by $-1$ on $H^{3,0} (Y)$ so that $X_i$ has an anticanonical section isomorphic to $Y$,
\item both $X_i$ and $X_i^*$ are simply-connected, which leads to the simply-connectedness of $M_\mcX$
and
\item the group of automorphisms of  $E_\zeta^3$ that is generated by $\sigma_1$, $\sigma_2$ is infinite, which eventually leads to the non-K\"ahlerness of $M_\mcX$.
\end{enumerate}
The author  could not obtain other non-K\"ahler Calabi--Yau fourfolds of different topological Euler numbers although he tried many pairs of such  matrices. The author suspects that all the pairs of such  matrices may give rise to non-K\"ahler Calabi--Yau fourfolds of the same  topological Euler number ($=108$).


\end{document}